\newtheorem{theorem}{Theorem}[section]
\newtheorem{remark}[theorem]{Remark}
\begin{document}

\title[On geometric constants for (small) Morrey spaces]{On geometric constants for (small) Morrey spaces}

\author[A.~Mu'tazili]{Aqfil Mu'tazili}
\address{Faculty of Mathematics and Natural Sciences, Bandung Institute of Technology, Bandung 40132, Indonesia}
\email{aqfil.mt@gmail.com}

\author[H.~Gunawan]{Hendra Gunawan}
\address{Faculty of Mathematics and Natural Sciences, Bandung Institute of Technology, Bandung 40132, Indonesia}
\email{hgunawan@math.itb.ac.id}

\subjclass[2010]{46B20}

\keywords{Morrey spaces, Dunkl--Williams constant, James constant, von Neumann--Jordan constant}

\begin{abstract}
    In this article, we compute Von Neumann--Jordan constant, James constant, and Dunkl--Williams constant for small
    Morrey spaces. Our approach can also be seen as an alternative way in computing the three constants for the
    (classical) Morrey spaces. In addition, we prove constructively that Morrey spaces are not uniformly non-octahedral.
\end{abstract}

\maketitle

\section{Introduction}

Let $1\leq p\leq q<\infty$. The (classical) {\em Morrey space} $\mathcal{M}^p_q=\mathcal{M}^p_q(\mathbb{R}^d)$
is the set of all measurable functions $f$ such that
\begin{equation*}
    \|f\|_{\mathcal{M}^p_q} := \sup_{{a\in \mathbb{R}^d},{R>0}}|B(a,R)|^{\frac{1}{q}-\frac{1}{p}}
    \biggl(\int\limits_{B(a,R)} |f(y)|^p dy\biggr)^{\frac{1}{p}}<\infty,
\end{equation*}
where $|B(a,R)|$ denotes the Lebesgue measure of the open ball $B(a,R)$ in $\mathbb{R}^d$, with center $a$
and radius $R$. Meanwhile, the {\em small Morrey space} $m^p_q=m^p_q(\mathbb{R}^d)$ is defined to be the set of
all measurable functions $f$ such that
\begin{equation*}
\|f\|_{m^p_q} := \sup_{{a\in \mathbb{R}^d},{R\in(0,1)}}|B(a,R)|^{\frac{1}{q}-\frac{1}{p}}
\biggl(\int\limits_{B(a,R)} |f(y)|^p dy\biggr)^{\frac{1}{p}}<\infty.
\end{equation*}
Morrey spaces and small Morrey spaces are Banach spaces \cite{sawano}, and for each $p$ and $q$ the small
Morrey space properly contains the Morrey space. Note that for $p=q$, the space $m^q_q$ is identical with
the $L^q_{\rm uloc}$ space \cite{sawano}.

In \cite{2018arXiv181000963G}, three geometric constants, namely {\em Von Neumann--Jordan constant}, {\em James constant},
and {\em Dunkl--Williams constant}, have been computed for Morrey spaces.
For a Banach space $(X,\|\cdot\|_X)$ in general, the three constants are defined by
\begin{equation*}
    C_{\rm NJ}(X) := \sup\biggl\{\frac{\|x+y\|_X^2+\|x-y\|_X^2}{2(\|x\|_X^2+\|y\|_X^2)} : x,y \in X\setminus \{0\} \biggr\},
\end{equation*}
\begin{equation*}
    C_{\rm J}(X) := \sup\bigl\{\min\{\|x+y\|_X,\|x-y\|_X\} : x,y\in X, \|x\|_X=\|y\|_X=1 \bigr\},
\end{equation*}
and
\begin{equation*}
    C_{\rm DW}(X) := \sup\biggl\{\frac{\|x\|_X+\|y\|_X}{\|x-y\|_X}\biggl\|\frac{x}{\|x\|_X}-\frac{y}{\|y\|_X}\biggr\|_X :
    x,y \in X\backslash \{0\}, x\neq y \biggr\}
\end{equation*}
respectively. These constants measure {\em uniformly non-squareness} of $X$ \cite{james, kato}.

Here are some known facts about these constants:
\begin{itemize}
    \item $1\leq C_{\rm NJ}(X)\leq 2$ and $C_{\rm NJ}(X)=1$ if and only if $X$ is a Hilbert space \cite{jordan1935inner}.
    \item $\sqrt{2}\leq C_{\rm J}(X)\leq 2$ and $C_{\rm J}(X)=\sqrt{2}$ if $X$ is a Hilbert space \cite{gao1990geometry}.
    \item $2\leq C_{\rm DW}(X)\leq 4$ and $C_{\rm DW}(X)=2$ if and only if $X$ is a Hilbert space \cite{jimenez2008dunkl}.
    \item For $1\le p\le \infty$, $C_{\rm NJ}(L^p)=\max\{2^{\frac{2}{p}-1},2^{1-\frac{2}{p}}\}$ \cite{10.2307/1968512}
    and $C_{\rm J}(L^p)=\max\{2^{\frac{1}{p}},2^{1-\frac{1}{p}}\}$ \cite{gao1990geometry}. Here $L^p=L^p(\mathbb{R}^d)$
    denotes the space of $p$-th power integrable functions on $\mathbb{R}^d$.
\end{itemize}

For the case of Morrey spaces, we know the following results.

\begin{theorem}{\rm \cite{2018arXiv181000963G}}\label{Morrey}
For $1\le p<q<\infty$, we have $C_{\rm NJ}(\mathcal{M}^p_q)=C_{\rm J}(\mathcal{M}^p_q)=2$
and $C_{\rm DW}(\mathcal{M}^p_q)=4$.
\end{theorem}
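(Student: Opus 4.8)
The plan is to reduce the whole statement to three lower bounds and to extract all of them from a single two-bump configuration. The cited general facts already give $C_{\rm NJ}(X)\le 2$, $C_{\rm J}(X)\le 2$ and $C_{\rm DW}(X)\le 4$ for every Banach space $X$, so it suffices to exhibit, inside $\mathcal{M}^p_q$, test functions forcing $C_{\rm NJ}\ge 2$, $C_{\rm J}\ge 2$ and $C_{\rm DW}\ge 4$. First I would fix a radius $r>0$, choose two balls $B_1=B(a_1,r)$ and $B_2=B(a_2,r)$ whose centres are very far apart (how far is quantified below), and set $N:=|B(0,r)|^{1/q}$.

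The technical heart, and the step I expect to be the main obstacle, is the norm identity
\begin{equation*}
\|a\chi_{B_1}+b\chi_{B_2}\|_{\mathcal{M}^p_q}=N\max\{|a|,|b|\}\qquad(a,b\in\mathbb{R}).
\end{equation*}
Balls meeting only $B_1$ contribute at most $|a|N$, since for $\rho\le r$ one has $|B(c,\rho)|^{1/q-1/p}\bigl(\int_{B(c,\rho)}|a|^p\bigr)^{1/p}=|a|\,|B(c,\rho)|^{1/q}\le|a|N$; likewise balls meeting only $B_2$ give at most $|b|N$, and taking $B=B_1$ or $B=B_2$ shows these values are attained, which already yields the lower bound $N\max\{|a|,|b|\}$. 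The real work is to rule out a ball $B(c,R)$ that meets both $B_1$ and $B_2$: such a ball must have $R\gtrsim\tfrac12|a_1-a_2|$, and a direct computation gives
\begin{equation*}
|B(c,R)|^{\frac1q-\frac1p}\Bigl(\int_{B(c,R)}|a\chi_{B_1}+b\chi_{B_2}|^p\Bigr)^{\frac1p}
= N\,(R/r)^{d(\frac1q-\frac1p)}\,(|a|^p+|b|^p)^{\frac1p}.
\end{equation*}
Since $\tfrac1q-\tfrac1p<0$ while $(|a|^p+|b|^p)^{1/p}\le 2^{1/p}\max\{|a|,|b|\}$, choosing $|a_1-a_2|$ large enough that $(R/r)^{d(\frac1p-\frac1q)}>2^{1/p}$ makes the right-hand side strictly smaller than $N\max\{|a|,|b|\}$. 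Thus $\mathrm{span}\{\chi_{B_1},\chi_{B_2}\}$ is isometric to a scaled $\ell^\infty_2$, and all three constants become elementary computations in this two-dimensional model.

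For $C_{\rm NJ}$ and $C_{\rm J}$ I would take $x=\chi_{B_1}+\chi_{B_2}$ and $y=\chi_{B_1}-\chi_{B_2}$, so that $\|x\|=\|y\|=N$ while $x+y=2\chi_{B_1}$ and $x-y=2\chi_{B_2}$ have norm $2N$. Then $\frac{\|x+y\|^2+\|x-y\|^2}{2(\|x\|^2+\|y\|^2)}=\frac{8N^2}{4N^2}=2$, giving $C_{\rm NJ}(\mathcal{M}^p_q)=2$; and with the unit vectors $x/N,\,y/N$ one gets $\min\{\|x/N+y/N\|,\|x/N-y/N\|\}=\min\{2,2\}=2$, giving $C_{\rm J}(\mathcal{M}^p_q)=2$.

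For $C_{\rm DW}$ the point is that normalisation amplifies a small difference. I would keep $x=\chi_{B_1}+\chi_{B_2}$ but take $y=(1+\beta)\chi_{B_1}+(1-\beta)\chi_{B_2}$ with $\beta\in(0,1)$ small, so that $\|x\|=N$, $\|y\|=(1+\beta)N$ and $\|x-y\|=\beta N$, while $\frac{x}{\|x\|}-\frac{y}{\|y\|}=\frac{2\beta}{(1+\beta)N}\chi_{B_2}$ has norm $\frac{2\beta}{1+\beta}$. Substituting into the definition yields
\begin{equation*}
\frac{\|x\|+\|y\|}{\|x-y\|}\,\Bigl\|\tfrac{x}{\|x\|}-\tfrac{y}{\|y\|}\Bigr\|=\frac{2(2+\beta)}{1+\beta}\longrightarrow 4\qquad(\beta\to 0^+),
\end{equation*}
which together with $C_{\rm DW}\le 4$ gives $C_{\rm DW}(\mathcal{M}^p_q)=4$. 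The only genuine difficulty in the whole argument is the separation estimate of the second paragraph; once the $\ell^\infty_2$ identity is secured, everything else is bookkeeping in that model.
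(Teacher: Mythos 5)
Your argument is correct, but it takes a genuinely different route from the paper's. The paper proves the statement by working with a single radial function $f(x)=\chi_{(0,1)}(|x|)|x|^{-d/q}$ centered at the origin, splitting it into an inner piece $g=\chi_{(0,\epsilon)}(|\cdot|)f$ and an outer piece $h=\chi_{(\epsilon,1)}(|\cdot|)f$, and exploiting the homogeneity of $|x|^{-d/q}$ to compute all the relevant norms; its lower bounds, e.g.\ $1+(1-\epsilon^{d-\frac{dp}{q}})^{\frac{2}{p}}$ for $C_{\rm NJ}$, only tend to $2$ (resp.\ $4$) as $\epsilon\to 0$, so a limiting argument is intrinsic to all three constants there. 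You instead place two indicator bumps $\chi_{B_1},\chi_{B_2}$ far apart and show their span is an isometric scaled copy of $\ell^\infty_2$; the decisive estimate --- a ball catching both bumps pays a factor $(R/r)^{d(\frac1q-\frac1p)}$ which, because $p<q$, eventually beats the gain $(|a|^p+|b|^p)^{1/p}\le 2^{1/p}\max\{|a|,|b|\}$ --- is exactly where the hypothesis $p<q$ enters, playing the role that $\epsilon^{d-\frac{dp}{q}}\to0$ plays in the paper. What your route buys: the $C_{\rm NJ}$ and $C_{\rm J}$ lower bounds are attained exactly by fixed functions, with no limit needed (only $C_{\rm DW}$ requires $\beta\to0^+$, which is unavoidable since that supremum is never attained), and the $\ell^\infty_2$-embedding is structurally stronger information --- with more distant bumps one gets scaled $\ell^\infty_n$ copies, which would immediately yield the non-octahedrality result of the paper's Section 3 as well. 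What the paper's route buys: all of its test functions are supported in the unit ball, so the identical computation simultaneously handles the small Morrey spaces $m^p_q$ (the paper's main objective), whereas your bumps drift off to infinity and the adaptation to $m^p_q$, though easy (balls of radius less than $1$ never meet both bumps once they are separated), would need to be said. Two small repairs to your write-up: first, the displayed ``direct computation'' for a ball meeting both bumps is an inequality, not an equality --- equality would require $B(c,R)\supseteq B_1\cup B_2$ --- but only the upper bound is used, so nothing breaks; second, your justification that balls meeting only $B_1$ contribute at most $|a|N$ covers only $\rho\le r$, and for $\rho>r$ one should add the one-line bound $|B(c,\rho)|^{\frac1q-\frac1p}\bigl(|a|^p|B(c,\rho)\cap B_1|\bigr)^{\frac1p}\le |a|\,|B_1|^{\frac1q-\frac1p}|B_1|^{\frac1p}=|a|N$, using that the exponent $\frac1q-\frac1p$ is negative.
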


In this article, we shall compute the three constants for small Morrey spaces. We prove that the constants for
small Morrey spaces are the same as those for Morrey spaces. Our proof may be considered as an alternative
proof of the above theorem. In addition, we also prove that the (classical) Morrey spaces
are not {\em uniformly non-octahedral}.

\section{Main Results}

Our main results are presented in the following theorem. We prove the theorem constructively, using different
functions than those used in \cite{2018arXiv181000963G}.

\begin{theorem}
Let $1\leq p< q<\infty$. Then $C_{\rm NJ}(m^p_q)=C_{\rm J}(m^p_q)=2$ and $C_{\rm DW}(m^p_q)=4$.
\end{theorem}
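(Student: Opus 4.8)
The plan is to use that the universal upper bounds $C_{\rm NJ}(X)\le 2$, $C_{\rm J}(X)\le 2$ and $C_{\rm DW}(X)\le 4$ are already recorded above, so that only the matching lower bounds must be produced by explicit test functions in $m^p_q$. I would obtain $C_{\rm NJ}=C_{\rm J}=2$ together by building an isometric copy of $\ell^\infty_2$, and treat $C_{\rm DW}=4$ separately through a nested, scale-invariant construction; the second is where $p<q$ is genuinely used. The same functions also work for $\mathcal M^p_q$ (large balls being harmless because the exponent $\tfrac1q-\tfrac1p$ is negative), which is why this doubles as an alternative proof of Theorem \ref{Morrey}.

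For the first two constants, fix $r\in(0,1)$ and two balls $B_1=B(a_1,r)$, $B_2=B(a_2,r)$ with $|a_1-a_2|>2+2r$, so that $\mathrm{dist}(B_1,B_2)>2$. Put $u=|B_1|^{-1/q}\chi_{B_1}$ and $v=|B_2|^{-1/q}\chi_{B_2}$. A direct computation gives $\|\chi_{B(a,r)}\|_{m^p_q}=|B(a,r)|^{1/q}$ --- the defining supremum is attained on the concentric ball of radius $r$, larger radii being penalised by the negative exponent --- so $\|u\|_{m^p_q}=\|v\|_{m^p_q}=1$. Since every ball of radius $R<1$ has diameter $<2$, no admissible ball meets both supports, and therefore $\|\alpha u+\beta v\|_{m^p_q}=\max\{|\alpha|,|\beta|\}$ for all scalars $\alpha,\beta$. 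Taking $f=u+v$ and $g=u-v$ gives $\|f\|=\|g\|=1$ and $\|f+g\|=\|2u\|=2=\|2v\|=\|f-g\|$, whence $C_{\rm NJ}(m^p_q)\ge\frac{2^2+2^2}{2(1+1)}=2$ and $C_{\rm J}(m^p_q)\ge\min\{2,2\}=2$.

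For the Dunkl--Williams constant I would switch to the critical function. For $0<r_2<r_1<1$ set $w_\rho(x)=|x|^{-d/q}\chi_{B(0,\rho)}(x)$; since $p<q$ this lies in $m^p_q$, and a radial computation shows the density $|B(0,s)|^{1/q-1/p}(\int_{B(0,s)}|w_\rho|^p)^{1/p}$ is independent of $s\le\rho$. Normalising, $u:=w_{r_1}/K$ and $v:=w_{r_2}/K$ then have norm $1$. Writing $\theta:=(r_2/r_1)^{d(1-p/q)}$ and $0<\mu<1$, I would test
\[
x=(1-\tfrac{\mu}{2})(-u+2v),\qquad y=(-1+\mu)u+(2-\mu)v .
\]
The point is that $|{-u+2v}|=|u|$ and $|y|\le|u|$ pointwise, with equality on $B(0,r_2)$; hence $\|x\|=1-\tfrac{\mu}{2}$ and $\|y\|=1$, while $x-y=-\tfrac{\mu}{2}u$ gives $\|x-y\|=\tfrac{\mu}{2}$, and $\tfrac{x}{\|x\|}-\tfrac{y}{\|y\|}=-\mu(u-v)$ has norm at least its origin-centred value $\mu(1-\theta)^{1/p}$. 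Consequently
\[
C_{\rm DW}(m^p_q)\ \ge\ \frac{\|x\|+\|y\|}{\|x-y\|}\left\|\frac{x}{\|x\|}-\frac{y}{\|y\|}\right\|\ \ge\ 2\Bigl(2-\tfrac{\mu}{2}\Bigr)(1-\theta)^{1/p},
\]
and letting $\mu\to0$ and then $r_2/r_1\to0$ drives this to $4$.

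The main obstacle is the norm bookkeeping: in each step I must know that the supremum defining $\|\cdot\|_{m^p_q}$ is controlled by concentric, origin-centred balls. For the $\ell^\infty_2$ part this is the separation argument above; for the critical function the delicate case is an off-centre ball lying inside the annulus $r_2\le|x|\le r_1$, where $|u|$ is merely bounded. A scaling estimate shows any such ball produces density at most $(1-p/q)^{1/p}<1$, which is dominated by the origin-centred value $(1-\theta)^{1/p}\to1$ precisely in the regime $r_2/r_1\to0$ that the argument uses. Pleasantly, the decisive quantities $\|y\|=1$, $\|x\|=1-\tfrac{\mu}{2}$ and $\|x-y\|=\tfrac{\mu}{2}$ all follow from pointwise domination by $|u|$, so the only genuinely non-elementary fact I rely on is the standard one that $\|u\|=1$; the lower bound on $\bigl\|\tfrac{x}{\|x\|}-\tfrac{y}{\|y\|}\bigr\|$ needs only the trivial estimate from a single origin-centred ball.
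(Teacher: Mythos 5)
Your proof is correct, and it takes a genuinely different route from the paper's, most visibly for $C_{\rm NJ}$ and $C_{\rm J}$. The paper works throughout with the critical function $f=\chi_{(0,1)}(|x|)|x|^{-d/q}$, splitting it as $f=g+h$ (inner piece plus annulus), setting $k=g-h$, and letting $\epsilon\to 0$, so the value $2$ is reached only in the limit; you instead embed $\ell^\infty_2$ isometrically via two normalized indicator bumps whose supports are separated by more than $2$, so that no admissible ball (radius $<1$, hence diameter $<2$) meets both and $\|\alpha u+\beta v\|_{m^p_q}=\max\{|\alpha|,|\beta|\}$ exactly. This is cleaner: no limiting argument, and it does not even use $p<q$. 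The one caveat is your side remark that the same pair handles $\mathcal{M}^p_q$: there, arbitrarily large balls do meet both supports, and the density over such a ball is comparable to $(r/R)^{d(1/p-1/q)}(|\alpha|^p+|\beta|^p)^{1/p}$, so the separation must be taken large relative to $r$ (possible precisely because $p<q$), not merely $>2+2r$; the paper's construction, by contrast, transfers to $\mathcal{M}^p_q$ verbatim. For $C_{\rm DW}$ your argument is the same idea in different clothes: both proofs exploit the scale-invariance of the density of $|x|^{-d/q}$, the paper perturbing the split $g,h$ by weights $1\pm\delta$, you combining nested truncations $w_{r_1},w_{r_2}$; identifying your $v$ with the inner piece and $u-v$ with the annulus, your pair $(x,y)$ is essentially a reparametrization of the paper's pair $(f,l)$, with $\mu\to 0$ and $r_2/r_1\to 0$ playing the roles of $\delta\to 0$ and $\epsilon\to 0$. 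Your bookkeeping is sound: the only nontrivial norm identity you invoke is $\|u\|=1$, which (exactly as in the paper) rests on the standard fact that for radially decreasing functions origin-centred balls realize the Morrey supremum; the values of $\|x\|$, $\|y\|$, $\|x-y\|$ then follow from pointwise comparison with $|u|$, and the last factor needs only a single origin-centred ball. Finally, the ``scaling estimate'' in your closing paragraph bounding off-centre densities in the annulus by $(1-p/q)^{1/p}$ is unnecessary (and not obviously correct as stated), but it is not load-bearing, precisely because only lower bounds on $\|u-v\|$ enter the Dunkl--Williams estimate.
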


\begin{proof}
Define $f(x):=\chi_{(0,1)}(|x|)|x|^{-\frac{d}{q}}$, $g(x):=\chi_{(0,\epsilon)}(|x|)f(x)$,
$h(x):=\chi_{(\epsilon,1)}(|x|)f(x)$, $k(x):=g(x)-h(x)$ and $l(x):=(1+\delta)g(x)+(1-\delta)h(x)$
with $0<\epsilon,\delta<1$.
Here $g, h$ and $k$ depend on $\epsilon$, while $l$ depends on $\epsilon$ and $\delta$.
Noting that all functions are radial functions, $f,g, l$ are radially decreasing, $0\le h\le f$,
and $|k|=f$, we have
\begin{equation*}
    \|f\|_{m^p_q}=\sup_{R\in(0,1)} |B(0,R)|^{\frac1q-\frac1p} \biggl(\int\limits_{B(0,R)}|y|^{-\frac{dp}{q}}dy
    \biggr)^{\frac{1}{p}}=\Bigl(\frac{C_d}{d}\Bigr)^{\frac{1}{q}} \Bigl(1-\frac{p}{q}\Bigr)^{-\frac{1}{p}},
\end{equation*}
\begin{equation*}
     \begin{split}
    \|g\|_{m^p_q} &= \sup_{R\in(0,1)}|B(0,R)|^{\frac{1}{q}-\frac{1}{p}}\biggl(\int\limits_{B(0,R)}
    |\chi_{(0,\epsilon)}(|y|)f(y)|^p dy\biggr)^{\frac{1}{p}}\\
    &= \sup_{{R\in(0,\epsilon)}} |B(0,R)|^{\frac1q-\frac1p} \biggl(\int\limits_{B(0,R)}
    |y|^{-\frac{dp}{q}}dy\biggr)^{\frac{1}{p}}\\
    &= \Bigl(\frac{C_d}{d}\Bigr)^{\frac{1}{q}}\Bigl(1-\frac{p}{q}\Bigr)^{-\frac{1}{p}}\\
    &= \|f\|_{m^p_q},
    \end{split}
\end{equation*}
\begin{equation*}
    \begin{split}
    \|h\|_{m^p_q} &\ge \sup_{{R\in(0,1)}}|B(0,R)|^{\frac{1}{q}-\frac{1}{p}}\biggl(\int\limits_{B(0,R)}
    |\chi_{(\epsilon,1)}(|y|)f(y)|^p dy\biggr)^{\frac{1}{p}}\\
    &= \sup_{{R\in(\epsilon,1)}} |B(0,R)|^{\frac{1}{q}-\frac{1}{p}}\biggl(\int\limits_{B(0,R)\backslash B(0,\epsilon)}
    	|y|^{-\frac{dp}{q}} dy\biggr)^{\frac{1}{p}}\\
    &= \sup_{R\in (\epsilon,1)}\Bigl(\frac{C_d}{d}\Bigr)^{\frac{1}{q}-\frac{1}{p}} R^{\frac{d}{q}-\frac{d}{p}}
    \biggl(C_d\int\limits_\epsilon^R r^{-\frac{dp}{q}+d-1}dr\biggr)^{\frac{1}{p}}\\
    &= \sup_{R\in (\epsilon,1)} \Bigl(\frac{C_d}{d}\Bigr)^{\frac{1}{q}}\Bigl(1-\frac{p}{q}\Bigr)^{-\frac{1}{p}}
    (1-R^{\frac{dp}{q}-d}\epsilon^{-\frac{dp}{q}+d})^{\frac{1}{p}}\\
    &= (1-\epsilon^{-\frac{dp}{q}+d})^{\frac{1}{p}}\|f\|_{m^p_q},
\end{split}
\end{equation*}
and
\begin{equation*}
    \begin{split}
        \|k\|_{m^p_q} &= \sup_{{R\in(0,1)}}|B(0,R)|^{\frac{1}{q}-\frac{1}{p}}\biggl(\int\limits_{B(0,R)}
        |[\chi_{(0,\epsilon)}(|y|)-\chi_{(\epsilon,1)}(|y|)]f(y)|^p
        dy\biggr)^{\frac{1}{p}}\\
        &= \sup_{{R\in(0,1)}}|B(0,R)|^{\frac{1}{q}-\frac{1}{p}}\biggl(\int\limits_{B(0,R)} |f(y)|^p dy
        \biggr)^{\frac{1}{p}}\\
        &= \|f\|_{m^p_q},
    \end{split}
\end{equation*}
where $C_d$ denotes the `area' of the unit sphere in $\mathbb{R}^d$. We observe that
\begin{equation*}
    \begin{split}
        C_{\rm NJ}(m^p_q)
        &\ge \frac{\|f+k\|_{m^p_q}^2+\|f-k\|_{m^p_q}^2}{2(\|f\|_{m^p_q}^2+\|k\|_{m^p_q}^2)}\\
        &= \frac{\|2g\|_{m^p_q}^2+\|2h\|_{m^p_q}^2}{2(\|f\|_{m^p_q}^2+\|k\|_{m^p_q}^2)}
        = \frac{4\|g\|_{m^p_q}^2+4\|h\|_{m^p_q}^2}{2(\|f\|_{m^p_q}^2+\|k\|_{m^p_q}^2)}\\
        &\ge \frac{2\bigl(\|f\|_{m^p_q}^2+(1-\epsilon^{-\frac{dp}{q}+d})^{\frac{2}{p}}\|f\|_{m^p_q}^2
        \bigr)}{\|f\|_{m^p_q}^2+\|f\|_{m^p_q}^2}\\
        &= 1+(1-\epsilon^{-\frac{dp}{q}+d})^{\frac{2}{p}},\\
    \end{split}
\end{equation*}
and this holds for any $0<\epsilon<1$. As $\epsilon$ can be arbitrarily small, we obtain $C_{\rm NJ}(m^p_q)\geq2$.
Since $C_{\rm NJ}(m^p_q)\leq 2$, we conclude that $C_{\rm NJ}(m^p_q)=2$.

Next, for James constant, we observe that
\begin{equation*}
    \begin{split}
        \min\{\|f+k\|_{m^p_q},\|f-k\|_{m^p_q}\}&=\min\{\|2g\|_{m^p_q},\|2h\|_{m^p_q}\}\\
        &\ge 2 \min\{\|f\|_{m^p_q},(1-\epsilon^{-\frac{dp}{q}+d})^{\frac{1}{p}}\|f\|_{m^p_q}\}\\
        &= 2(1-\epsilon^{-\frac{dp}{q}+d})^{\frac{1}{p}}\|f\|_{m^p_q}.
    \end{split}
\end{equation*}
By dividing both sides by $\|f\|_{m^p_q}$ ($=\|k\|_{m^p_q}$), we get
\begin{equation*}
    C_{\rm J}(m^p_q) \ge \min\biggl\{\biggl\|\frac{f}{\|f\|_{m^p_q}}+\frac{k}{\|k\|_{m^p_q}}\biggr\|_{m^p_q},
    \biggl\|\frac{f}{\|f\|_{m^p_q}}-\frac{k}{\|k\|_{m^p_q}}\biggr\|_{m^p_q}\biggr\}=2(1-\epsilon^{-\frac{dp}{q}
    +d})^{\frac{1}{p}},
\end{equation*}
and this also holds for any $0<\epsilon<1$. Using similar arguments as before, we conclude that $C_{\rm J}(m^p_q)=2$.

Finally, let us compute the Dunkl--Williams constant. Observe that
\begin{equation*}
    \begin{split}
       \|l\|_{m^p_q} &= \sup_{{R\in(0,1)}}|B(0,R)|^{\frac{1}{q}-\frac{1}{p}}\biggl(\int\limits_{B(0,R)}
       |(1+\delta)g(y)+(1-\delta)h(y)|^p dy\biggr)^{\frac{1}{p}}\\
       &= \sup_{{R\in(0,1)}}|B(0,R)|^{\frac{1}{q}-\frac{1}{p}}\biggl(\int\limits_{B(0,R)}
       |[1+\delta\chi_{(0,\epsilon)}(|y|)-\delta\chi_{(\epsilon,1)}(|y|)]f(y)|^p dy\biggr)^{\frac{1}{p}}\\
       &= \sup_{{R\in(0,1)}}\Bigl(\frac{C_d}{d}\Bigr)^{\frac{1}{q}-\frac1p} R^{\frac{d}{q}-\frac{d}{p}}\biggl(C_d
       \int\limits_0^R [1+\delta\chi_{(0,\epsilon)}(r)-\delta\chi_{(\epsilon,1)}(r)]^pr^{-\frac{dp}{q}+d-1} dr\biggr)^{\frac{1}{p}}\\
       &=\Bigl(\frac{C_d}{d}\Bigr)^{\frac{1}{q}}\Bigl(1-\frac{p}{q}\Bigr)^{-\frac{1}{p}}(1+\delta)\\
       &= (1+\delta)\|f\|_{m^p_q}.
    \end{split}
\end{equation*}
This implies that
\begin{equation*}
    \begin{split}
        C_{\rm DW}(m^p_q) \ge &\ \frac{\|f\|_{m^p_q}+\|l\|_{m^p_q}}{\|f-l\|_{m^p_q}}\biggl\|\frac{f}{\|f\|_{m^p_q}}-
        \frac{l}{\|l\|_{m^p_q}}\biggr\|_{m^p_q}\\
        = &\ \frac{\|f\|_{m^p_q}+(1+\delta)\|f\|_{m^p_q}}{\|f-((1+\delta)g+(1-\delta)h)\|_{m^p_q}}\biggl\|\frac{f}
        {\|f\|_{m^p_q}}-\frac{(1+\delta)g+(1-\delta)h}{(1+\delta)\|f\|_{m^p_q}}\biggr\|_{m^p_q}\\
        = &\ \frac{(2+\delta)\|f\|_{m^p_q}}{\|\delta k\|_{m^p_q}}\biggl\|\frac{2\delta h}{(1+\delta)\|f\|_{m^p_q}}
        \biggr\|_{m^p_q}\\
        \ge &\ \frac{(2+\delta)\|f\|_{m^p_q}}{\delta\|f\|_{m^p_q}}\frac{2\delta
        (1-\epsilon^{-\frac{dp}{q}+d})^{\frac{1}{p}}\|f\|_{m^p_q}}{(1+\delta)\|f\|_{m^p_q}}\\
        = &\ \frac{(4+2\delta)}{1+\delta}(1-\epsilon^{-\frac{dp}{q}+d})^{\frac{1}{p}},\\
    \end{split}
\end{equation*}
and this holds for any $0<\delta,\epsilon<1$. As $\delta$ and $\epsilon$ can be arbitrarily small, we must have
$C_{\rm DW}(m^p_q)\geq 4$.
Using the same arguments as before, we conclude that $C_{\rm DW}(m^p_q)=4$.
\end{proof}

\begin{remark}{\rm
Note that since $f, g, h, k, l \in \mathcal{M}^p_q$ with $\|\diamond\|_{\mathcal{M}^p_q}=\|\diamond\|_{m^p_q}=\|f\|_{m^p_q}$
for $\diamond=f,g,k,l$ and $\|h\|_{\mathcal{M}^p_q}\ge\|h\|_{m^p_q}\ge (1-\epsilon^{-\frac{dp}{q}+d})^{\frac{1}{p}}\|f\|_{m^p_q}$,
we obtain that $C_{\rm NJ}(\mathcal{M}^p_q)=C_{\rm J}(\mathcal{M}^p_q)
=2$ and $C_{\rm DW}(\mathcal{M}^p_q)=4$ for $1\leq p< q<\infty$, as stated in Theorem \ref{Morrey}.}
\end{remark}

\section{Additional Results}

Recall that a Banach space $X$ with a norm $\|\cdot\|_X$ is uniformly non-square if there exists
a $\delta>0$ such that
\[
\min\{\|x+y\|_X,\|x-y\|_X\} \le 2(1-\delta)
\]
for all $x,y\in X$ with $\|x\|_X=\|y\|_X=1$ \cite{james}. The results in the previous section, especially the fact
that $C_{\rm J}(\mathcal{M}^p_q)=2$, implies that the Morrey spaces $\mathcal{M}^p_q$ are not uniformly non-square
(and hence not uniformly convex) for $1\leq p<q<\infty$.

Using seven functions with (approximately) the same Morrey norm, we can also prove that the Morrey spaces
$\mathcal{M}^p_q$ (where $1\le p<q<\infty$) are not
uniformly non-octahedral, that is, there does not exist a $\delta>0$ such that
\[
\min \|F\pm K\pm U\|_{\mathcal{M}^p_q} \le 3(1-\delta)
\]
for every $F,K,U\in \mathcal{M}^p_q$ with $\|F\|_{\mathcal{M}^p_q}=\|K\|_{\mathcal{M}^p_q}=\|U\|_{\mathcal{M}^p_q}=1$.
Here the minimum is taken over all choices of signs in the expression $F\pm K\pm U$.
This follows from the following theorem.

\begin{theorem}\label{Octahedral}
Let $1\leq p< q<\infty$. Then, for every $\delta>0$, there exist $F,K,U\in \mathcal{M}^p_q$ (depending on
$\delta$) with $\|F\|_{\mathcal{M}^p_q}=\|K\|_{\mathcal{M}^p_q}=\|U\|_{\mathcal{M}^p_q}=1$ such that
\[
\|F\pm K\pm U\|_{\mathcal{M}^p_q}> 3(1-\delta)
\]
for all choices of signs.
\end{theorem}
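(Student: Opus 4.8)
The plan is to reproduce the mechanism behind $C_{\rm J}(\mathcal{M}^p_q)=2$, but now with \emph{three} sign-modulated copies of the singular profile, arranged so that each of the four combinations $F\pm K\pm U$ is reinforced on its own radial scale. Write $f(x)=\chi_{(0,1)}(|x|)|x|^{-d/q}$ and $\beta:=d(1-\tfrac pq)>0$. The one computation I would isolate first is that, for any radial factor $\phi$ and any $R\in(0,1)$, testing the Morrey norm on the centred ball $B(0,R)$ gives the lower bound
\begin{equation*}
\|\phi f\|_{\mathcal{M}^p_q}^p\ \ge\ \|f\|_{\mathcal{M}^p_q}^p\,\frac{\int_0^R|\phi(r)|^p\,d\mu(r)}{\mu([0,R])},\qquad d\mu(r):=r^{\beta-1}\,dr,
\end{equation*}
so that the relevant quantity is simply the $\mu$-average of $|\phi|^p$ over $[0,R]$. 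Since $\beta>0$ we have $\mu([0,\rho])=\rho^{\beta}/\beta$, hence an outer shell $(\rho_1,\rho_2)\subseteq[0,\rho_2]$ carries the fraction $1-(\rho_1/\rho_2)^{\beta}$ of the mass of $[0,\rho_2]$: the mass of a centred ball concentrates on its outermost geometrically thick shell.

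Next I would fix a small $\eta\in(0,1)$ and cut $(0,1)$ into nested shells $I_k=(\eta^{k},\eta^{k-1})$. To each shell I attach a sign triple $(\sigma^F,\sigma^K,\sigma^U)\in\{\pm1\}^3$, always with $\sigma^F=+1$, and I make the four values $(\sigma^K,\sigma^U)\in\{(+,+),(+,-),(-,+),(-,-)\}$ occur among the shells (four shells already realise all four; spreading them over seven nested shells is equally fine). Letting $F,K,U$ be the corresponding signed restrictions of $f$, we have $|F|=|K|=|U|=\chi_{\bigcup_k I_k}f=:\tilde f$, so $\|F\|_{\mathcal{M}^p_q}=\|K\|_{\mathcal{M}^p_q}=\|U\|_{\mathcal{M}^p_q}\le\|f\|_{\mathcal{M}^p_q}$ by monotonicity of the norm in $|\cdot|$, and we may normalise all three to norm $1$. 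The purpose of the sign assignment is that on the shell carrying $(\sigma^K,\sigma^U)=(\varepsilon_1,\varepsilon_2)$ the combination $F+\varepsilon_1K+\varepsilon_2U$ has coefficient $3$, while on every shell each such coefficient has modulus at least $1$.

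Then, for a fixed choice of signs, I would bound its norm from below by choosing $R$ equal to the outer radius of the shell on which that combination is reinforced. There $|\phi|^p=3^p$, and by the concentration remark that shell carries the fraction $1-\eta^{\beta}$ of the mass of $[0,R]$, so the $\mu$-average is at least $3^p(1-\eta^{\beta})$. Combining this with $\|F\|_{\mathcal{M}^p_q}\le\|f\|_{\mathcal{M}^p_q}$ yields
\begin{equation*}
\frac{\|F\pm K\pm U\|_{\mathcal{M}^p_q}}{\|F\|_{\mathcal{M}^p_q}}\ \ge\ 3\bigl(1-\eta^{\beta}\bigr)^{1/p}\ \xrightarrow[\ \eta\to0\ ]{}\ 3
\end{equation*}
for every choice of signs. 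Given $\delta>0$, I would then fix $\eta$ small enough that $3(1-\eta^{\beta})^{1/p}>3(1-\delta)$, and the normalised $F,K,U$ satisfy the desired inequality.

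The main obstacle to flag is that no single $F,K,U$ can make all four combinations equal to $\pm3\tilde f$ on one common ball, so genuine concentration is impossible; the argument works only because, for $p<q$, the measure $d\mu=r^{\beta-1}dr$ pushes its mass to the outer rim of each centred ball. This is exactly what lets one reinforced shell, placed as the outer rim of a tailored ball, dominate the average while the non-reinforced inner shells contribute a vanishing fraction, and making four such tailored balls coexist is what forces the shells to sit at well-separated geometric scales $\eta^k$. The only real computation is the mass estimate at each scale, which is routine once the scales are separated; the supremum over off-centre balls in the definition of $\|\cdot\|_{\mathcal{M}^p_q}$ causes no trouble, since that norm depends only on $|\cdot|$ (so $\|F\|=\|K\|=\|U\|$ holds identically) and centred balls already furnish the lower bounds we need.
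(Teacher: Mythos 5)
Your construction is correct, and its combinatorial core coincides with the paper's: assign the four sign patterns $(\pm,\pm)$ for $(K,U)$ to four disjoint radial shells of the singular profile $|x|^{-d/q}$, so that each combination $F\pm K\pm U$ has coefficient of modulus $3$ on ``its own'' shell, which in turn carries almost the whole Morrey mass of a suitable test ball. Where the two proofs genuinely differ is in where the shells sit. The paper keeps the untruncated homogeneous function $f(x)=|x|^{-d/q}$ on all of $\mathbb{R}^d$ and uses the shells $(0,\epsilon)$, $(\epsilon,1)$, $(1,\tfrac1\epsilon)$, $(\tfrac1\epsilon,\infty)$ (so $k$ flips sign at $|x|=1$ and $u$ at $\epsilon,1,\tfrac1\epsilon$); dilation invariance then makes the innermost and outermost restrictions carry the full norm exactly, while the two middle shells carry the fraction $(1-\epsilon^{\beta})^{1/p}$ --- the same mass estimate you isolate. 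Since that argument needs balls of arbitrarily large radius, it is tied to $\mathcal{M}^p_q$. Your version nests all four shells inside the unit ball at scales $\eta^k$ and tests each combination on the centred ball whose outer rim is its reinforced shell, so every test radius is at most $1$ (for the outermost shell take $R=1$, or $R\uparrow 1$); as a consequence your argument proves, verbatim, the analogous statement for the small Morrey space $m^p_q$ as well, which the paper's construction cannot give, and which is the natural companion to the paper's Section 2. One point to make explicit in a write-up: your key inequality $\|\phi f\|^p_{\mathcal{M}^p_q}\ \ge\ \|f\|^p_{\mathcal{M}^p_q}\,\int_0^R|\phi|^p\,d\mu\big/\mu([0,R])$ uses not only that centred balls are admissible test balls (which gives the left-hand side), but also that $\|f\|_{\mathcal{M}^p_q}$ is \emph{attained} on centred balls of radius at most $1$ (so that the normalizing constant $\|f\|^p_{\mathcal{M}^p_q}/\mu([0,R])$ is the correct one); this holds because $f$ is radially decreasing and supported in the unit ball, and it is the same fact the paper invokes without proof throughout its computations, so you are on equal footing there.
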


\begin{proof}
Let $f(x):=|x|^{-\frac{d}{q}},\ k(x):=[\chi_{(0,1)}(|x|)-\chi_{(1,\infty)}(|x|)]f(x),\
u(x):=[\chi_{(0,\epsilon)}(|x|)-\chi_{(\epsilon,1)}(|x|)+\chi_{(1,\frac{1}{\epsilon})}(|x|)-
\chi_{(\frac{1}{\epsilon},\infty)}(|x|)]f(x)$, where $0<\epsilon<1$.
Then we have
\begin{align*}
3g &\le |f+k+u| \le 3f,\\
3h &\le |f+k-u| \le 3f,\\
3v &\le |f-k+u| \le 3f,\\
3w &\le |f-k-u| \le 3f,
\end{align*}
where $g(x):=\chi_{(0,\epsilon)}(|x|)f(x)$, $h(x):=\chi_{(\epsilon,1)}(|x|)f(x)$, $v(x):=
\chi_{(1,\frac{1}{\epsilon})}(|x|)f(x)$, $w(x):=\chi_{(\frac{1}{\epsilon},\infty)}(|x|)f(x)$.
Observe that $\|f\|_{\mathcal{M}^p_q}=\|k\|_{\mathcal{M}^p_q}=\|u\|_{\mathcal{M}^p_q}$.
Further, we can compute that
\[
\|g\|_{\mathcal{M}^p_q}=\|w\|_{\mathcal{M}^p_q}=\|f\|_{\mathcal{M}^p_q}
\]
and
\[
(1-\epsilon^{-\frac{dp}{q}+d})^{\frac{1}{p}}\|f\|_{\mathcal{M}^p_q}\le \|h\|_{\mathcal{M}^p_q}=
\|v\|_{\mathcal{M}^p_q}\le \|f\|_{\mathcal{M}^p_q}.
\]
The functions that we are looking for are $F:=\frac{f}{\|f\|_{\mathcal{M}^p_q}},\ K:=
\frac{k}{\|f\|_{\mathcal{M}^p_q}}$, and $U:=\frac{u}{\|f\|_{\mathcal{M}^p_q}}$.
\end{proof}

\medskip

\noindent{\bf Acknowledgements}. This work is supported by P3MI-ITB 2019 Program. We
thank Dr. Denny I. Hakim for useful discussion regarding the functions in the proof
of Theorem \ref{Octahedral}.

\medskip

\end{document}